
\documentclass{birkjour}
%
%
%
\usepackage[T1]{fontenc}
\usepackage{amsrefs}
 \numberwithin{equation}{section}
\newtheorem{teo}{Theorem}[section]

\newtheorem{defin}[teo]{Definition}
\newtheorem{remark}[teo]{Remark}

\newtheorem{lemma}[teo]{Lemma}

\newcommand\N{\mathbb{N}}

\newcommand\R{\mathbb{R}}
\newcommand\ds{\displaystyle}

\def\elle#1{L^{#1}(\mathcal{Q})}

\def\div{{\rm div}}
\def\elle#1{L^{#1}(\Omega)}
\def\w#1#2{W^{#1,#2}_0(\Omega)}
\def\io{\int_{\mathcal{Q}}}
\def\norma#1#2{\|#1\|_{\lower 4pt \hbox{$\scriptstyle #2$}}}
\def\un{u_n}

\def\finedim
\def\gw{G_{\tilde{k}}(w_n)}

\def\R{I \!\!R}
\def\N{I \!\! N}
\def\elle#1{L^{#1}(\mathcal{Q})}

\def\be{\begin{equation}}
\def\ee{\end{equation}}

\title[Parabolic problems]{Parabolic problems with slightly superlinear convection terms}
%

\author{Fessel Achhoud}
\address[F.\ Achhoud]{University of Catania, Viale Adrea Doria 6, 95125 Catania, Italy}
\email{fessel.achhoud@studenti.unime.it}

%
%

\date{\today}
\keywords{non-linear parabolic problem, slightly super-linear convection terms, non coercive operators}

\begin{document}

\maketitle 
 \begin{abstract}
 In this paper we deal with a non-linear parabolic problem which involving a convection term with  super--linear growth, whose
model is
\[
\frac{\partial u}{\partial t}-\div(\mathcal{M}(x,t)\nabla u)= -\div(u\log (e+|u|)E(x,t))+f(x,t),
\]
where $\mathcal{M}$ is a bounded measurable matrix, the vector field $E$ and the function $f$ belong to suitable Lebesgue spaces.  We prove the existence of a unique bounded and unbounded weak solution.
\end{abstract}

\tableofcontents

\section{Introduction and main results}
The study of parabolic partial differential equations with non-linear convection terms has a rich history, deeply rooted in the analysis of existence, regularity, and qualitative behavior of solutions. Seminal works, such as the foundational paper by Lucio Boccardo et al \cite{bop}, established  results for equations whose simplest model is
$$
u_t - \text{div}(a(x,t,u)\nabla u) = -\text{div}(uE)  \text{ in } \Omega \times (0,T),\quad u(0,x) = u_0(x)  \text{ in } \Omega,
$$
where $\Omega$ is a bounded domain, $T>0,$ $E \in (L^2(\Omega \times (0,T)))^N,$ $u_0(x)\in L^1(\Omega)$ and $a(x,t,u)$ is a bounded  Carathéodory function. Recently, by considering the same equation, the authors in \cite{bopbis} have proved the existence and boundedness of solutions as well as an improvement on the regularity results by considering the following assumptions on the vector field $E$:
\begin{equation*}
\begin{aligned}
&|E(x, t)| \leq \mu|B(x)|, \quad \mu>0, \quad B \in L^N(\Omega) \quad \forall(x, t) \in \Omega\times (0,T),\\
&|E(x, t)| \leq \frac{B}{|x|}, \quad B>0.
\end{aligned}
\end{equation*} 
In recent years, considerable attention has been given to the study of convection terms of the type 
$-\div(h(u)E(x,t))$, aiming to understand how the specific nonlinear structure of the function $h$ affects the existence, uniqueness, and regularity properties of solutions. The interaction between the nonlinearity in $h$ and the external field $E(x,t)$ plays a crucial role in determining the qualitative behavior of the solutions. Especially when $h$ exhibits super-linear growth at infinity, that is,
\[
\lim_{|l| \to \infty} \frac{|h(l)|}{|l|} = +\infty,
\]  
such a property  fundamentally breaks down the coercivity and integrability properties of the associated operators. The nature of this growth, whether polynomial or logarithmic plays a decisive role in the proof of the existence and regularity of solutions. 

In what follows  we denote by \(\mathcal{Q} \) the cylinder \( \Omega \times (0, T) \) and $\Sigma:=\partial \Omega\times(0,T)$. We shall address the following nonlinear parabolic Cauchy–Dirichlet problem
\begin{equation} \label{problem}
\begin{cases}
\ds\frac{\partial u}{\partial t} -\div(\mathcal{M}(x,t)\nabla u)= -\div(h(u)E(x,t))+f(x,t) \qquad & \mbox{in } \mathcal{Q},\\
u (x,0) = 0 & \mbox{in }  \Omega,\\
u=0 & \mbox{on } \Sigma.
\end{cases}
\end{equation}
Here, $M: \mathcal{Q}\rightarrow \R^N$ is a measurable matrix such that
\begin{equation}\label{alfa}
 \alpha |\xi|^2\leq 
  \mathcal{M}(x,t)\xi\cdot\xi \leq\beta|\xi|^2,  \quad  \mbox{a.e.}\; (x,t)\in\mathcal{Q},\quad \forall\; \;\xi\in\R^N,
\end{equation}
for some constants $\alpha, \beta > 0$. The vector field $E$ and the source term $f$ are assumed to belong to appropriate Lebesgue spaces, while the nonlinearity $h: \mathbb{R} \to \mathbb{R}$ is given by
\begin{equation} \label{ipoh}
h(l) := l \log(e + |l|), \quad \text{for all } l \in \mathbb{R}.
\end{equation}

In the stationary case, with power-type super-linearity functions like \( u|u|^\theta \) (\(\theta \geq 0\)) which grow polynomially,  was studied in the sixties by Stampacchia (see \cite{stamp}) with restrictive smallness conditions on the data \( E \) or \( f \).  For more and different aspects concerning a very weak hypothesis on the vector field \( E \) we refer to \cite{Bumi2009, Bumi2012, jde, cirmi2022}.

 The authors of \cite{bbc2024, marah} recently analyzed equations with slightly super-linear growth. The logarithmic factor ensures that \( h\) retains sufficient integrability properties even for large \(u \). This delicate balance allows solutions to exist without imposing size restrictions on \( E \) or \( f \). Unlike power-law terms, which dominate the diffusion operator \( -\operatorname{div}(\mathcal{M}(x,t)\nabla u) \) at infinity, the logarithmic growth moderates this competition.

 First, we prove the existence of bounded weak solutions under optimal integrability conditions on \(E \) and \(f\), extending the elliptic existence results to the parabolic case. 
\begin{teo}\label{generalh}
Assume \eqref{alfa}, \eqref{ipoh} and take $E\in [\elle r]^N$,  with $ r>N+2$, and $f\in \elle m$, with $ m>\ds\frac{N}{2}+1$. Then, there exists a unique weak solution $u\in L^2(0,T; W^{1,2}_0(\Omega))\cap \elle{\infty}$ to problem \eqref{problem}.
\end{teo}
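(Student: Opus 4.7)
The plan is to obtain the solution by a classical truncation/approximation scheme. For each $n \in \mathbb{N}$ I would introduce the regularized problem
\[
\partial_t u_n - \div(\mathcal{M}\nabla u_n) = -\div\bigl(h(T_n(u_n))\,E_n\bigr) + f_n, \qquad u_n(\cdot,0)=0,\ u_n|_\Sigma=0,
\]
where $E_n,f_n$ are suitable truncations of $E,f$, so that the convective field $h(T_n(u_n))$ is bounded for each fixed $n$. Existence of $u_n \in L^2(0,T;W^{1,2}_0(\Omega))\cap L^\infty(\mathcal{Q})$ is then standard, by a Schauder-type fixed-point argument in the spirit of \cite{bop}.

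\textbf{Uniform $L^\infty$-bound.} This is the heart of the proof and the main obstacle. Testing with $G_k(u_n)$ for $k\geq k_0\gg 1$, the coercivity \eqref{alfa} together with Young's inequality yields
\[
\tfrac{1}{2}\sup_{\tau\in(0,T)}\|G_k(u_n)(\tau)\|_{L^2(\Omega)}^2 + \tfrac{\alpha}{2}\iint_{\mathcal{Q}}|\nabla G_k(u_n)|^2 \leq C\iint_{A_k}h(u_n)^2|E|^2 + \iint_{A_k}|f|\,|G_k(u_n)|,
\]
with $A_k:=\{|u_n|>k\}$. On $A_k$ one has $|u_n|=k+|G_k(u_n)|$, hence $h(u_n)^2 \leq C\bigl(k^2+G_k(u_n)^2\bigr)\log^2(e+|u_n|)$. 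Estimating the $|E|^2$ factor by Hölder with exponent $r/2$, the $|f|$ factor with exponent $m$, and controlling the remaining norms of $G_k(u_n)$ via the parabolic Gagliardo--Nirenberg embedding
\[
\|v\|_{L^{2(N+2)/N}(\mathcal{Q})}^2 \leq C\Bigl(\sup_{\tau\in(0,T)}\|v(\tau)\|_{L^2(\Omega)}^2 + \|\nabla v\|_{L^2(\mathcal{Q})}^2\Bigr),
\]
valid for $v\in L^\infty(0,T;L^2(\Omega))\cap L^2(0,T;W^{1,2}_0(\Omega))$, the strict inequalities $r>N+2$ and $m>N/2+1$ leave a positive power of $|A_k|$ in the estimate, which absorbs the slowly-growing logarithm. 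A Stampacchia-type iteration on a geometric sequence of levels $k_j\to\infty$ then closes the scheme and delivers $\|u_n\|_{L^\infty(\mathcal{Q})}\leq M$ independent of $n$.

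\textbf{Energy estimate, compactness and passage to the limit.} Since $u_n$ is uniformly bounded, so is $h(T_n(u_n))$, and testing with $u_n$ itself gives $\|u_n\|_{L^2(0,T;W^{1,2}_0(\Omega))}\leq C$. The equation then bounds $\partial_t u_n$ in $L^2(0,T;W^{-1,2}(\Omega))+L^{m'}(\mathcal{Q})$, and Aubin--Lions yields a subsequence converging a.e.\ and strongly in $L^2(\mathcal{Q})$. Dominated convergence, combined with the uniform $L^\infty$-bound, lets one pass to the limit in the distributional formulation, producing a weak solution $u\in L^2(0,T;W^{1,2}_0(\Omega))\cap L^\infty(\mathcal{Q})$.

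\textbf{Uniqueness.} For two bounded solutions, $w:=u_1-u_2$ satisfies $w(0)=0$ and $\partial_t w - \div(\mathcal{M}\nabla w) = -\div((h(u_1)-h(u_2))E)$. Since $h$ is Lipschitz on $[-M,M]$, testing with $w$, bounding $\iint|E|^2 w^2$ by Hölder against $|E|^r$, and using interpolation of $w$ between $L^2(\mathcal{Q})$ and $L^{2(N+2)/N}(\mathcal{Q})$ (admissible because $r>N+2$) introduces, on a short interval $[0,T_0]$, a factor $T_0^\theta$ with $\theta>0$; for $T_0$ small this forces $w\equiv 0$ on $[0,T_0]$, and a finite iteration yields uniqueness on $[0,T]$.
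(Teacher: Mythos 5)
The overall architecture of your proposal (truncated approximating problems, uniform $L^\infty$ bound, energy estimate, Aubin--Lions compactness, Vitali, comparison for uniqueness) matches the paper, but the central step --- the uniform $L^\infty$ bound --- has a genuine gap. Testing with $G_k(u_n)$ produces the term $\frac1\alpha\int_{A_k}h(u_n)^2|E|^2\,dx\,dt$ with $A_k:=\{|u_n|>k\}$, and since $h(u_n)^2\geq |G_k(u_n)|^2\log^2(e+k)$ on $A_k$, the part that must be reabsorbed into the left-hand side carries (after H\"older and Gagliardo--Nirenberg) the coefficient $\log^2(e+k)\,\|E\|^2_{L^r(A_k)}$, together with a genuinely superlinear remainder $\int_{A_k}|G_k(u_n)|^2\log^2(e+|G_k(u_n)|)|E|^2\,dx\,dt$ that is not dominated by $\|G_k(u_n)\|^2_{L^{2(N+2)/N}(\mathcal{Q})}$ times a small constant. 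A positive power of $|A_k|$ does not ``absorb the logarithm'': $\log^2(e+|u_n|)$ is unbounded on $A_k$ and is not controlled by $|A_k|^\delta$; taming it requires either a pointwise bound on $u_n$ (circular) or an a priori integral bound on $\log(e+|u_n|)$, neither of which you establish. Moreover, the absorption needs $\log^2(e+k)\|E\|^2_{L^r(\{|u_n|>k\})}$ to be small \emph{uniformly in $n$}, which presupposes a quantitative $n$-uniform decay of $|\{|u_n|>k\}|$; your proposal never proves such a decay, and it cannot be extracted from the truncated problems alone since no smallness is assumed on $E$ or $f$.

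The paper sidesteps this confrontation entirely. It first proves Lemma \ref{decay} by testing with $G_{\varphi(k)}(\varphi(u_n^+))$, with $\varphi$ as in \eqref{nonlintest}: the weight $(|h|+1)^{-2}$ renders the convection term harmless because $|h|/(|h|+1)<1$, yielding a uniform energy bound for $H(u_n)$ (with $H$ as in \eqref{defH}) on the superlevel sets. Then, setting $z_n=H(u_n)$, estimate \eqref{inqb1} combined with the Aronson--Serrin boundedness theorem \cite{Aronson_1967} (applicable because $|E|^2+|f|\in L^m(\mathcal{Q})$ with $m>\frac N2+1$) gives $\|z_n\|_{L^\infty(\mathcal{Q})}\le\mathcal C$, and \eqref{growthg} converts this into $\|u_n\|_{L^\infty(\mathcal{Q})}\le H^{-1}(\mathcal C)$. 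If you wish to keep a Stampacchia-type iteration, it must be run on $z_n=H(u_n)$ (equivalently, on the levels $\varphi(k)$), not on $u_n$ itself. Your uniqueness argument for bounded solutions is essentially sound and amounts to a Gronwall-type variant of the paper's comparison Lemma \ref{comparison}; the energy estimate and the passage to the limit are also fine once the $L^\infty$ bound is available.
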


Second, we are also able to provide existence of unbounded solutions, as the following Theorem states.
\begin{teo}
\label{logarithmich}
Assume \eqref{alfa}, take
$E\in (L^r(\mathcal{Q}))^N$,  with $ r>N+2$, and 
$f\in  L^{2}(0,T; W^{-1,2}(\Omega))$. Then, there exists a unique weak solution $u\in L^{2}(0,T; W^{1,2}_0(\Omega))$  to problem \eqref{problem}.
\end{teo}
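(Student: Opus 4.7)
The plan is to follow the standard four-step scheme---approximate, estimate, pass to the limit, uniqueness---with the main technical work in obtaining a gradient bound uniform with respect to the approximation parameter. First, I would regularize by setting $h_n(s):=T_n(h(s))$ (and, if convenient, approximate $f$ by $f_n\in L^2(\mathcal{Q})$ with $f_n\to f$ in $L^2(0,T;W^{-1,2}(\Omega))$). The truncated problem
\[
\partial_t u_n-\div(\mathcal{M}(x,t)\nabla u_n)+\div(h_n(u_n) E)=f_n\text{ in }\mathcal{Q},\ u_n=0\text{ on }\Sigma,\ u_n(\cdot,0)=0,
\]
has a globally bounded first-order term and admits a unique solution $u_n\in L^2(0,T;W^{1,2}_0(\Omega))\cap C([0,T];L^2(\Omega))$ by classical Leray--Lions theory.

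The crux is the $n$-independent a priori estimate. Taking $u_n$ as a test function on $(0,t)$, using \eqref{alfa}, and absorbing $\tfrac{\alpha}{2}\int|\nabla u_n|^2$ via Young's inequality reduces matters to controlling $\int_0^t\!\int_\Omega |h(u_n)|^2|E|^2$. Combining the pointwise bound $|h(s)|\leq C_\delta(|s|+|s|^{1+\delta})$ valid for every $\delta>0$, H\"older's inequality with $E\in L^r(\mathcal{Q})$, and the parabolic Sobolev embedding $L^\infty(0,T;L^2(\Omega))\cap L^2(0,T;W^{1,2}_0(\Omega))\hookrightarrow L^{2(N+2)/N}(\mathcal{Q})$, the strict inequality $r>N+2$ is exactly what provides the admissible range $\delta\in\bigl(0,2(r-N-2)/(Nr)\bigr)$ and leads to a nonlinear functional inequality
\[
X_n(t)\leq C_1\|E\|_{L^r(\mathcal{Q})}^2\bigl(X_n(t)+X_n(t)^{1+\delta}\bigr)+C_2\|f\|_{L^2(0,T;W^{-1,2}(\Omega))}^2,
\]
with $X_n(t):=\sup_{s\leq t}\|u_n(s)\|_{L^2(\Omega)}^2+\int_0^t\|\nabla u_n(s)\|_{L^2(\Omega)}^2\,ds$. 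Since $X_n(0)=0$ and $X_n$ is continuous in $t$, a continuity/bootstrap argument then yields a uniform bound $X_n(t)\leq M$ on $[0,T]$ independent of $n$.

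Once the uniform estimate is secured, $u_n\rightharpoonup u$ weakly in $L^2(0,T;W^{1,2}_0(\Omega))$; writing $\partial_t u_n=\div(\mathcal{M}\nabla u_n-h_n(u_n)E)+f_n$ produces a bound in $L^2(0,T;W^{-1,2}(\Omega))+L^{r'}(0,T;W^{-1,r'}(\Omega))$, so Aubin--Lions yields strong convergence in $L^2(\mathcal{Q})$ and pointwise a.e.\ convergence. Coupled with the uniform $L^{2r/(r-2)}(\mathcal{Q})$ bound on $h(u_n)$ obtained in the previous step, Vitali's theorem gives $h(u_n)E\to h(u)E$ strongly in $(L^2(\mathcal{Q}))^N$, which is enough to pass to the limit in the weak formulation. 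For uniqueness, if $u$ and $v$ are two weak solutions, testing the equation for $w=u-v$ with $T_k(w)$, exploiting the locally Lipschitz behavior of $h$ together with the Sobolev-embedding bounds on $u$ and $v$, and applying Gronwall's inequality before letting $k\to\infty$ produces $u\equiv v$.

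The principal obstacle is the second step: the super-linear contribution $X_n^{1+\delta}$ prevents a direct Gronwall closure, and the continuity bootstrap in $t$ must be executed carefully, genuinely exploiting both the vanishing initial datum $u_n(\cdot,0)=0$ and the strict margin provided by $r>N+2$. Once this estimate is in place, the remaining steps proceed along lines that are by now classical in the theory of parabolic equations with convection terms.
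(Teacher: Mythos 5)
Your overall scheme (truncate, estimate, Aubin--Lions plus Vitali, comparison for uniqueness) matches the paper's, but there is a genuine gap at the heart of Step 2, and it is not a technicality. By replacing $\log(e+|s|)$ with the pointwise bound $|h(s)|\le C_\delta(|s|+|s|^{1+\delta})$ you convert the problem into one with power-type superlinearity, and the resulting inequality $X_n(t)\le C_1\|E\|_{L^r(\mathcal{Q})}^2\bigl(X_n(t)+X_n(t)^{1+\delta}\bigr)+C_2\|f\|^2$ cannot be closed by a continuity/bootstrap argument without size restrictions on the data. Indeed, with $a:=C_1\|E\|_{L^r(\mathcal{Q})}^2$ and $b:=C_2\|f\|^2$, the set $\{X\ge 0:\ X\le aX+aX^{1+\delta}+b\}$ has a bounded connected component containing $0$ only if $a<1$ \emph{and} $\sup_{X>0}\bigl[(1-a)X-aX^{1+\delta}\bigr]>b$; both are smallness conditions on $\|E\|_{L^r}$ and $\|f\|$ that the theorem does not assume (and that the paper explicitly claims to avoid). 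There is no free parameter to shrink $a$: taking $\delta\to 0$ only inflates $C_\delta\sim 1/\delta$, and attempting instead to peel off a Gronwall-able factor by interpolation turns the estimate into a Bihari-type inequality with superlinear right-hand side, which controls $X_n$ only up to a possibly finite blow-up time. This is exactly the obstruction the introduction attributes to power nonlinearities $u|u|^{\theta}$ (Stampacchia's smallness conditions); your pointwise bound discards the logarithmic structure that the theorem is designed to exploit.

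The paper closes the estimate by keeping the logarithm and localizing on superlevel sets. The key input is Lemma \ref{decay}: since $H(k)=\int_0^k ds/(1+|h(s)|)\to\infty$ for $h(s)=s\log(e+|s|)$ (it grows like $\log\log k$, whereas it stays \emph{bounded} for $h(s)=s|s|^{\delta}$, so your substitution destroys precisely this), the measure of $\{|u_n|>k\}$ decays uniformly in $n$, hence $\|E\|_{L^{N+2}}$ and $\|E\|_{L^{r}}$ restricted to $\{|u_n|>k\}\times(0,T)$ can be made arbitrarily small by choosing $k$ large. With this, one first tests with the bounded function $v=\int_0^{u_n}\log^{2(a-1)}(e+|s|)(e+|s|)^{-2}\,ds$ to obtain $\|\log^a(e+|u_n|)\|_{L^{2(N+2)/N}(\mathcal{Q})}\le C$ for every $a\ge1$, the absorption being possible because $E$ is small on the superlevel set; then, testing with $u_n$, the dangerous term $\int|u_n|^2\log^2(e+|u_n|)|E|^2$ is split by H\"older into $\|u_n\|_{L^{2(N+2)/N}}^{2}\,\|E\|^2_{L^r(\{|u_n|>k\}\times(0,T))}\,\|\log(e+|u_n|)\|^2_{L^b}$ with $1/b=1/(N+2)-1/r$, the last factor being bounded by the first stage and the middle one made small enough to absorb the first into the left-hand side. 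Your compactness and uniqueness steps are fine in outline (the comparison argument needs $|u|^2|E|^2\in L^1(\mathcal{Q})$, which follows from the same estimate), but without this level-set localization, or an equivalent device exploiting the logarithm, the a priori bound does not go through.
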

%
\begin{remark}
We note that we set all our problems in $L^{2}(0,T; W^{1,2}_0(\Omega))$. However the results still 
hold, with the same proofs, for differential operators with growth of order $p>1$ in the 
$L^{p}(0,T; W^{1,p}_0(\Omega))$ framework.
\end{remark}
\begin{remark}
We point out that the results above still hold if we consider general nonlinear convection term of the form $div(\mathcal{F}(x,t,u))$ such that
\begin{equation*}
\vert \mathcal{F}(x,t,\eta)\vert\leq E(x,t)\eta\vert \eta\vert^{\theta-1} \log^\beta(e+\vert \eta\vert),\quad 0\leq \theta,\beta\leq 1.
\end{equation*}

\end{remark}
 
The plan of the paper is as follows: In Sect. 2, we will recall and prove some auxiliary results, while in Sect. 3, we will give the proof of our main  result for the existence of bounded and unbounded solution to the problem \eqref{problem}.

\section{Preliminary results}
In this section we consider a general real function  $h$ which satisfies

\begin{equation}\label{gehy}
h\in W^{1,\infty}_{loc}(\Omega),\quad h(0)=0,\quad\text{ and }\quad
\lim_{|l| \to \infty} \frac{|h(l)|}{|l|} = +\infty,
\end{equation}
 Let us define the following functions:
\be\label{nonlintest}
\varphi(r)=\int_0^r\frac{dl}{(|h(l)|+1)^2},
\ee
and 
\begin{equation}
\label{defH}
H(r)=\int_0^r\frac{dl}{|h(l)|+1},
\end{equation}
such that
\be\label{growthg}
\lim_{|l|\to \infty}|H(l)|=\infty.
\ee

Given that uniqueness constitutes a central objective of this work, we highlight that comparison principle between sub- and super-solutions serve as the foundational tool for establishing such result. To rigorously develop this framework, we introduce the following definitions:
\begin{defin}
 We say that $v \in L^2\left(0, T ; W^{1,2}_0(\Omega)\right)$ is a subsolution of problem \eqref{problem} if $h(v)E(x,t) \in$ $(L^2(0,T;W^{-1,2}(\Omega)))^N$ and

\begin{equation}\label{inq1C}
\begin{aligned}
&\int_0^T \int_\Omega \frac{\partial v}{\partial t}\phi\;dxdt+\int_0^T \int_\Omega \mathcal{M}(x,t)\nabla v \nabla \phi\;dxdt 
\\&\le \int_0^T \int_\Omega h(v) E(x,t)\nabla \phi \;dxdt+\int_0^T \int_\Omega f\phi \;dxdt,
\end{aligned}
\end{equation}
for any $0\le \phi\in L^2(0,T;\w12)\cap \elle{\infty}.$
\end{defin}
\begin{defin}
We say that $w \in L^2\left(0, T ; W^{1,2}_0(\Omega)\right)$ is a supersolution of problem \eqref{problem} if $h(w)E(x,t) \in$ $(L^2(0,T;W^{-1,2}(\Omega)))^N$ and

\begin{equation}\label{inq2C}
\begin{aligned}
&\int_0^T \int_\Omega \frac{\partial w}{\partial t}\phi\;dxdt+\int_0^T \int_\Omega \mathcal{M}(x,t)\nabla w \nabla \phi\;dxdt 
\\&\ge \int_0^T \int_\Omega h(w) E(x,t)\nabla \phi \;dxdt+\int_0^T \int_\Omega f\phi \;dxdt,
\end{aligned}
\end{equation}
for any $0\le \phi\in L^2(0,T;\w12)\cap \elle{\infty}$.

\end{defin}
Now we are able to state and prove our main comparison lemma that will play the key role
in the proof of our main results. 

\begin{lemma}\label{comparison}
Assume that the  assumptions \eqref{alfa} and \eqref{gehy} hold. Let  $ f\in L^2(0,T;W^{-1,2}(\Omega))$, $E\in [\elle 2]^N$ and consider two functions $$v,w\in L^2(0, T ; W^{1,2}_0(\Omega)) \cap L^{\infty}(Q)$$ such that are respectively a subsolution and a supersolution of the problem \eqref{problem}.
 Therefore, we have 
 $$v\le w \quad  \text{ a.e. in }\Omega.$$
Moreover, if $v,w \in L^2(0, T ; W^{1,2}_0(\Omega))$ and we additionally assume that
\[
|h'(l)|\le c(|l|^{\theta}+1)\,  \mbox{ a.e. in } \mathbb R  \mbox{ and }  (|v|^{2\theta}+|w|^{2\theta})|E|^2\in\elle 1\quad \forall \theta>0,
\]
we can again conclude that 
$$v\le w \quad  \text{ a.e. in }\Omega.$$
\end{lemma}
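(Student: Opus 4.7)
The plan is to subtract \eqref{inq1C} from \eqref{inq2C}; with $z:=v-w$, the difference satisfies, for every admissible $\phi\ge 0$ and every $t\in(0,T]$,
\begin{equation*}
\int_0^t\!\!\int_\Omega \partial_s z\,\phi\,dxds + \int_0^t\!\!\int_\Omega \mathcal{M}\nabla z\cdot\nabla\phi\,dxds \le \int_0^t\!\!\int_\Omega (h(v)-h(w))E\cdot\nabla\phi\,dxds.
\end{equation*}
The test has to concentrate on $\{v>w\}$ and neutralize the convection despite the weak integrability of $E$.

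For the first part (bounded case), I would plug in the logarithmic cutoff $\phi_\eps:=\ln(1+(v-w)^+/\eps)$, which lies in $L^2(0,T;W^{1,2}_0(\Omega))\cap L^\infty(\mathcal{Q})$ thanks to $v,w\in L^\infty$. The Stampacchia-type chain rule turns the time term into $\int_\Omega \Phi_\eps(z(t))\,dx$, with $\Phi_\eps(r):=(\eps+r^+)\ln(1+r^+/\eps)-r^+\ge 0$ and zero initial contribution from $v(0)=w(0)=0$; coercivity \eqref{alfa} controls the diffusion from below by $\alpha\int|\nabla z^+|^2/(\eps+z^+)$. Since $h\in W^{1,\infty}_{loc}$ and $v,w\in L^\infty$, the local Lipschitz estimate $|h(v)-h(w)|\le L_K\, z^+$ on $\{v>w\}$ shows the convection integrand is at most $L_K|E||\nabla z^+|$ (using $z^+/(\eps+z^+)\le 1$), and Young's inequality — with the coercive diffusion absorbing half of the resulting term — yields
\begin{equation*}
\int_\Omega \Phi_\eps(z(t))\,dx \le \frac{L_K^{\,2}}{2\alpha}\int_0^t\!\!\int_\Omega (\eps+z^+)|E|^2\,dxds.
\end{equation*}
The right-hand side stays uniformly bounded as $\eps\to0$ (since $z^+\in L^\infty$ and $|E|^2\in L^1(\mathcal{Q})$), while $\Phi_\eps(z(t,x))\to+\infty$ wherever $z(t,x)>0$; Fatou's lemma then forces $|\{z(t)>0\}|=0$ for every $t$, i.e.\ $v\le w$ a.e.

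For the second statement (unbounded case), I would use the truncated logarithmic cutoff $\phi_\eps^k:=\ln(1+T_k((v-w)^+)/\eps)$, bounded by $\ln(1+k/\eps)$ and hence admissible, together with the pointwise bound $|h(v)-h(w)|\le c(|v|^\theta+|w|^\theta+1)z^+$ coming from the mean value theorem and $|h'(l)|\le c(|l|^\theta+1)$. Young's inequality now produces a right-hand side of order $C(\eps+k)\int(|v|^{2\theta}+|w|^{2\theta}+1)|E|^2$, finite by the integrability hypothesis $(|v|^{2\theta}+|w|^{2\theta})|E|^2\in L^1(\mathcal{Q})$; sending $\eps\to 0$ with $k$ fixed, Fatou applied to $\Phi_\eps^k(z(t))$ again produces a contradiction on $\{z(t)>0\}$ independently of $k$, and $v\le w$ follows.

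The hard part, which dictates the whole choice of test, is that with $E$ only in $[L^2(\mathcal{Q})]^N$ the naive choice $\phi=(v-w)^+$ yields a convection term $\int(z^+)^2|E|^2$ that is merely finite and not amenable to any standard Gronwall argument. The logarithmic cutoff bypasses this by embedding a pointwise blow-up into the time derivative, so the conclusion emerges from Fatou against a bounded convection integral rather than from an iterated Gronwall estimate.
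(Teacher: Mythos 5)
Your argument is correct, and it reaches the conclusion by a genuinely different device than the paper's. The paper tests the difference of the sub/supersolution inequalities with the small truncation $T_\varepsilon((v-w)^+)$: since $\nabla T_\varepsilon((v-w)^+)$ is supported on the thin slab $\{0<v-w<\varepsilon\}$, Young's inequality reduces the convection contribution to $\tfrac{1}{2\alpha}\int_{\{0<v-w<\varepsilon\}}|h(v)-h(w)|^2|E|^2$, which is forced to vanish as $\varepsilon\to0$ by absolute continuity of the integral of $|E|^2$ (bounded case) or of $(|v|^{2\theta}+|w|^{2\theta}+1)|E|^2$ (unbounded case); the conclusion then has to be extracted from the time term. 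You instead test with $\log(1+(v-w)^+/\varepsilon)$, keep the convection integral over the whole positivity set --- uniformly bounded in $\varepsilon$ thanks to exactly the same Lipschitz/growth bounds on $h$ --- and let the primitive $\Phi_\varepsilon$ blow up pointwise on $\{v>w\}$, concluding by Fatou (or monotone convergence, since $\Phi_\varepsilon$ increases as $\varepsilon\downarrow0$). The hypotheses consumed are identical. What your route buys is a cleaner endgame: the divergence of the explicit primitive $\Phi_\varepsilon$ does all the work, whereas the paper records the time contribution as $\tfrac12\int_\Omega\bigl((v-w)^+\bigr)^2(\tau)\,dx$, which is not literally the primitive of $T_\varepsilon$; to close that version one needs $\int_0^r T_\varepsilon(s)\,ds\ge\tfrac12 T_\varepsilon(r)^2$ together with the quantitative bound $|h(v)-h(w)|\le L\varepsilon$ on the slab, details the paper elides. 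One point you should make explicit, at the same level of rigor the paper itself assumes for $T_\varepsilon$: the integration by parts in time for the nonlinear composition $\Phi_\varepsilon((v-w)^+)$ requires the standard Steklov-averaging lemma for functions with $\partial_t z\in L^2(0,T;W^{-1,2}(\Omega))+L^1(\mathcal{Q})$.
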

\begin{proof}
Let us take $T_{{\epsilon}}(v-w)_+$ as a test function in the inequalities \eqref{inq1C} and \eqref{inq2C}; thus, taking the difference we get
\[
\begin{split}
&\int_0^\tau \int_\Omega \frac{\partial(v-w)_+}{\partial t} T_\varepsilon(v-w)_+\;dxdt+\alpha\io |\nabla T_{{\epsilon}}(v-w)_+ |^2\;dxdt\\&\le \int_0^\tau\int_{\{0<v-w<{\epsilon}\}} \left(h(v)-h(w)\right) E(x,t)\nabla T_{{\epsilon}}(v-w)_+\;dxdt\\
\le& \frac{1}{2\alpha}\int_0^\tau\int_{\{0< v-w< {\epsilon}\}}\left|h(v)-h(w)\right|^2 |E(x,t)|^2\;dxdt+\frac{\alpha}2\io |\nabla T_{{\epsilon}}(v-w)_+ |^2\;dxdt.
\end{split}
\]
Setting $z:=(v-w)_+$ , we deduce
\[
\begin{split}
&\frac{1}2\int_\Omega z^2(\tau)\;dx+\frac{\alpha}2\io |\nabla T_{{\epsilon}}(z) |^2\;dxdt\le \frac{1}{2\alpha}\int_{\{0< z< {\epsilon}\}}\left|h(v)-h(w)\right|^2 |E(x,t)|^2\;dxdt.
\end{split}
\]

We shall prove that $(v-w)_+\equiv 0$ a.e. in $\Omega$. To this aim, we need to show that 
\begin{equation}\label{eqlm}
\lim_{\epsilon\rightarrow 0}\int_0^\tau\int_{\{0< z< {\epsilon}\}}\left|h(v)-h(w)\right|^2 |E(x,t)|^2\;dxdt=0.
\end{equation}
In order to do it we discuss to cases:

If $v,w\in\elle{\infty}$, let us set $|v|+|w|\le M$ and use the assumption $h\in W^{1,\infty}_{loc}(\mathbb{R})$ to deduce that
\[
\int_0^\tau\int_{\{0< z< {\epsilon}\}}\left|h(v)-h(w)\right|^2 |E(x,t)|^2\;dxdt\le C L_M^2 \int_{\{0< z< {\epsilon}\}}|E(x)|^2 \;dxdt,
\]
for a suitable $L_M$ independent on $\epsilon$. So, since $E\in [\elle 2]^N$, \eqref{eqlm} holds.

On the other hand, if $|h'(l)|\le c(|l|^{\theta}+1)$, we have that
\[
|h(v)-h(w)|\le c|v-w|\int_0^1\left(|w+t(v-w)|^{\theta}+1\right)dt\le \tilde c|v-w|(|v|^{\theta}+|w|^{\theta}+1 ),
\]
thus, we get 
\begin{align*}
 &\int_0^\tau\int_{\{0< z< {\epsilon}\}}\left|h(v)-h(w)\right|^2 |E(x,t)|^2\;dxdt\\&\le C\int_0^\tau\int_{\{0< z< {\epsilon}\}}(|v|^{2\theta}+|w|^{2\theta}+1 )|E(x,t)|^2\;dxdt,
\end{align*}
which, since $\left(|v|^{2\theta}+|w|^{2\theta}\right)|E|^2\in\elle 1,$ implies that \eqref{eqlm} holds true.
\end{proof}

 We recall the fundamental inequality of Gagliardo-Niremberg.

\begin{lemma}(Gagliardo-Nirenberg inequality)\label{lm1}
States that if \( v \) is a function in \( W_0^{1,q}(\Omega) \cap L^\rho(\Omega) \), with \( q \geq 1 \) and \( \rho \geq 1 \), then there exists a positive constant \(\tilde{C} \), dependent on \( N \), \( q \), and \( \rho \), such that:
\begin{equation}\label{inq1}
\|v\|_{L^z(\Omega)} \leq \tilde{C}\|\nabla v\|_{\left(L^q(\Omega)\right)^N}^\theta\|v\|_{L^\rho(\Omega)}^{1-\theta},
\end{equation}
for every \( \theta \) and \( \gamma \) satisfying:
\begin{equation}
0 \leq \theta \leq 1, \quad 1 \leq z<+\infty, \quad \frac{1}{z}=\theta\left(\frac{1}{q}-\frac{1}{N}\right)+\frac{1-\theta}{\rho}.
\end{equation}
\end{lemma}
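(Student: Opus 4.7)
The plan is to follow the classical route, deriving the inequality from Sobolev embedding combined with H\"older interpolation, since the statement is the standard Gagliardo–Nirenberg inequality on bounded domains. First I would dispose of the degenerate cases. If $\theta=0$, the admissibility condition forces $z=\rho$ and the inequality is trivial with constant $1$. If $\theta=1$, the condition reduces to $1/z=1/q-1/N$, i.e.\ $z=q^{*}$ (the Sobolev conjugate of $q$, which requires $q<N$), and the claim becomes the classical Sobolev embedding
\[
\|v\|_{L^{q^{*}}(\Omega)}\le C(N,q)\,\|\nabla v\|_{L^{q}(\Omega)},
\]
valid for $v\in W^{1,q}_{0}(\Omega)$.

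For the generic case $\theta\in(0,1)$ and $q<N$, the key step is to interpolate by H\"older between $L^{q^{*}}$ and $L^{\rho}$. The condition $1/z=\theta(1/q-1/N)+(1-\theta)/\rho=\theta/q^{*}+(1-\theta)/\rho$ is exactly the one that allows us to write
\[
\|v\|_{L^{z}(\Omega)}\le \|v\|_{L^{q^{*}}(\Omega)}^{\theta}\,\|v\|_{L^{\rho}(\Omega)}^{1-\theta}.
\]
Substituting the Sobolev bound on the first factor gives the claimed inequality with constant $\tilde{C}=C(N,q)^{\theta}$, which indeed depends only on $N$, $q$, and $\rho$ (through the admissible range of $\theta$).

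The borderline cases $q=N$ and $q>N$ are handled similarly: when $q=N$ the embedding $W^{1,N}_{0}(\Omega)\hookrightarrow L^{s}(\Omega)$ for every $s<\infty$ takes the place of the Sobolev conjugate, and when $q>N$ Morrey's embedding into $L^{\infty}(\Omega)$ does the same; in both cases the H\"older interpolation step is unchanged.

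The main obstacle, which is really bookkeeping rather than analysis, is to verify that the admissibility condition on $\theta$ guarantees that $z$ lies in the correct range between $\rho$ and the Sobolev (or Morrey) exponent, so that the H\"older exponents are positive and the interpolation is legitimate. Once this compatibility is established, the proof reduces to a one-line combination of Sobolev embedding and H\"older's inequality, and the bounded-domain hypothesis on $\Omega$ ensures no issues arise at infinity.
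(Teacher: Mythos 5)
The paper offers no proof of this lemma at all (it simply cites DiBenedetto's book), so there is nothing to compare your argument against; I will assess it on its own terms. For the subcritical case $q<N$ your argument is the standard one and is complete: the convexity relation $\frac1z=\frac{\theta}{q^*}+\frac{1-\theta}{\rho}$ automatically places $z$ between $q^*$ and $\rho$, so the interpolation inequality $\|v\|_{L^z}\le\|v\|_{L^{q^*}}^{\theta}\|v\|_{L^\rho}^{1-\theta}$ follows from H\"older, and the Sobolev embedding for $W^{1,q}_0(\Omega)$ finishes the job with a constant depending only on $N,q$.

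The gap is in your last paragraph: the cases $q=N$ and $q>N$ are \emph{not} ``handled similarly'' with ``the H\"older interpolation step unchanged,'' because the exponent bookkeeping fails there. If $q>N$ and you interpolate between $L^\infty$ (Morrey) and $L^\rho$, the exponent you obtain on $\|\nabla v\|_{L^q}$ is $1-\rho/z$; using $\frac1z=\theta\left(\frac1q-\frac1N\right)+\frac{1-\theta}{\rho}$ one computes $1-\frac{\rho}{z}=\theta\left(1-\frac{\rho}{q}+\frac{\rho}{N}\right)$, which equals $\theta$ only when $q=N$ --- and since $\frac1q-\frac1N<0$ this exponent is strictly larger than $\theta$, so the resulting inequality cannot be converted into the stated one (that would require $\|\nabla v\|_{L^q}\le C\|v\|_{L^\rho}$, which is false). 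If $q=N$, matching the exponent $\theta$ by interpolating between $L^s$ and $L^\rho$ forces $s=\infty$, and $W^{1,N}_0(\Omega)$ does not embed into $L^\infty(\Omega)$; choosing a finite $s$ again produces the wrong power of the gradient. These regimes require a genuinely different argument (e.g.\ applying the $L^1$-Sobolev inequality to powers $|v|^s$ and iterating, as in Nirenberg's original proof). This matters for the paper, since Lemma 2.7 is invoked with $q=\rho=2$, which is exactly the borderline case $q=N$ when $N=2$.
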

\begin{proof}
See \cite{DiBenedetto_1993}.
\end{proof}
 An immediate consequence of the previous lemma is the following embedding result:
\begin{lemma}\label{lm2}
 Suppose \( v \in L^q(0, T ; W_0^{1, q}(\Omega)) \cap L^{\infty}\left(0, T ; L^{\rho}(\Omega)\right) \), where \( \rho, q \geq 1 \). Then \( v \) is also in \( L^{\varrho}(Q) \), where \( \varrho = q \frac{(N+\rho)}{N} \). Moreover, there exists a positive constant \(\bar{C} \), dependent only on \( N, q, \rho \), such that:
 \begin{equation}\label{inq2}
 \int_{\mathcal{Q}}|v|^{\varrho}  dx dt \leq \bar{C}\left(\operatorname{ess} \sup _{0\leq t\leq T} \int_{\Omega}|v|^{\rho} \, dx\right)^{\frac{q}{N}} \int_{\mathcal{Q}}|\nabla v|^q  dx dt.
 \end{equation}
\end{lemma}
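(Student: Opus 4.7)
The plan is to apply Lemma \ref{lm1} slicewise in time with carefully chosen exponents, then raise to a power tuned so that the resulting right-hand side splits neatly into a $t$-independent factor (the $L^\rho$-norm, which can be pulled out via the essential supremum) times a purely integrable factor $|\nabla v|^q$.

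\textbf{Step 1: choose the interpolation parameter.} I set $z = \varrho = q(N+\rho)/N$ and look for $\theta \in [0,1]$ such that the Gagliardo--Nirenberg scaling condition
$$ \frac{1}{\varrho} = \theta\Bigl(\frac{1}{q} - \frac{1}{N}\Bigr) + \frac{1-\theta}{\rho} $$
is satisfied. A direct calculation gives $\theta = N/(N+\rho)$, which lies in $[0,1]$, and then $1-\theta = \rho/(N+\rho)$. I would quickly check that with this choice $\theta \varrho = q$ and $(1-\theta)\varrho = q\rho/N$; this is the algebraic identity that makes the rest of the proof align.

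\textbf{Step 2: apply Gagliardo--Nirenberg at a.e.\ time.} For almost every $t \in (0,T)$, the function $v(\cdot,t) \in W^{1,q}_0(\Omega) \cap L^\rho(\Omega)$, so Lemma \ref{lm1} yields
$$ \|v(\cdot,t)\|_{L^\varrho(\Omega)} \le \tilde C \,\|\nabla v(\cdot,t)\|_{L^q(\Omega)}^{N/(N+\rho)} \,\|v(\cdot,t)\|_{L^\rho(\Omega)}^{\rho/(N+\rho)}. $$
Raising to the power $\varrho$ and invoking the identities of Step~1, this becomes
$$ \|v(\cdot,t)\|_{L^\varrho(\Omega)}^{\varrho} \le \tilde C^{\varrho} \,\|\nabla v(\cdot,t)\|_{L^q(\Omega)}^{q} \,\|v(\cdot,t)\|_{L^\rho(\Omega)}^{q\rho/N}. $$

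\textbf{Step 3: integrate in $t$ and pull out the essential supremum.} Integrating over $(0,T)$ and bounding the $L^\rho$-factor by its essential supremum (which is finite by hypothesis), I obtain
$$ \int_{\mathcal{Q}} |v|^{\varrho}\, dx\, dt \le \tilde C^{\varrho} \Bigl(\esssup_{0\le t\le T} \|v(\cdot,t)\|_{L^\rho(\Omega)}^{\rho}\Bigr)^{q/N} \int_{\mathcal{Q}} |\nabla v|^q\, dx\, dt, $$
which is exactly \eqref{inq2} with $\bar C = \tilde C^{\varrho}$.

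There isn't really a hard step here: the only delicate point is matching the exponents in Step 1 so that, after raising to the $\varrho$-th power, the spatial gradient appears to the natural power $q$ and the $L^\rho$-norm appears to a power proportional to $\rho$ (hence amenable to being replaced by its sup). Once that algebra is in place, time integration and the essential supremum finish the argument immediately.
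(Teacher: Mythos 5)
Your argument is correct: the exponent bookkeeping in Step 1 checks out ($\theta=N/(N+\rho)$ gives $\theta\varrho=q$ and $(1-\theta)\varrho=q\rho/N$), and the slicewise application of Lemma \ref{lm1} followed by time integration and extraction of the essential supremum yields \eqref{inq2} with $\bar C=\tilde C^{\varrho}$. The paper itself offers no proof, only a citation to Proposition 3.1 of \cite{DiBenedetto_1993}, and your derivation is precisely the standard argument given there, so nothing is missing.
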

%
%
\begin{proof}
See  Proposition 3.1 in \cite{DiBenedetto_1993}.
\end{proof}

We prove now a Lemma, that will play an essential role in the proof of our main results.
\begin{lemma}\label{lme}
We have, for any $l\in \R$, that
\begin{equation}
\Phi(l)\geq H^2(l).
\end{equation}
with $\Phi(l)=\ds\int_0^l \varphi(s)\;ds.$
\end{lemma}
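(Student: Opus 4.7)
The plan is to reduce the inequality to a pointwise comparison on a triangular region via Fubini's theorem, exploiting the monotonicity of the weight $g(r) := 1/(|h(r)|+1)$. Since $\varphi$ and $H$ are both odd, $\Phi$ and $H^{2}$ are both even, so it suffices to treat $l\ge 0$; the case $l<0$ follows by applying the same argument to $-l$.

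The first step is to express both quantities as double integrals over the triangle $T_l := \{(r,s) : 0 < r < s < l\}$. Fubini's theorem rewrites
\[
\Phi(l)=\int_0^l\!\!\int_0^s g^2(r)\,dr\,ds=\iint_{T_l} g^2(r)\,dr\,ds=\int_0^l(l-r)\,g^2(r)\,dr,
\]
while expanding the square and using the symmetry $r\leftrightarrow s$ gives
\[
H^2(l)=\Bigl(\int_0^l g(r)\,dr\Bigr)^{\!2}=2\iint_{T_l} g(r)\,g(s)\,dr\,ds.
\]

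Combining these two representations would reduce the desired inequality to a pointwise comparison on $T_l$ involving an integrand of the form $g(r)\bigl(g(r)-g(s)\bigr)$. Since $r<s$ on $T_l$, non-negativity of this integrand is exactly the statement that $g$ is non-increasing, i.e.\ that $|h|$ is non-decreasing on $[0,\infty)$. For the model nonlinearity $h(l)=l\log(e+|l|)$ this monotonicity is immediate, making the comparison straightforward in the setting of Theorems \ref{generalh} and \ref{logarithmich}.

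The main obstacle is that the bare hypothesis \eqref{gehy} does not by itself force $|h|$ to be non-decreasing; a fully general proof would thus either require this additional monotonicity as an assumption (satisfied by the model $h$) or invoke a rearrangement argument in which $g$ is replaced by its non-increasing rearrangement on $[0,l]$, noting that both $\varphi(l)$ and $H(l)$ are invariant under this operation and that the rearrangement only strengthens the Fubini bound. Either route yields the claim, with equality essentially only in the degenerate regime where $g$ is constant on $[0,l]$.
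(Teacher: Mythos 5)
Your two Fubini representations are correct: $\Phi(l)=\int_0^l\varphi(s)\,ds=\int_0^l(l-r)g^2(r)\,dr$ is the integral of $g^2(r)$ over the triangle $T_l=\{0<r<s<l\}$, and $H^2(l)=\int_0^l\int_0^l g(r)g(s)\,dr\,ds$. But the reduction that follows contains an algebra slip that is fatal: by symmetry the square $[0,l]^2$ splits into two copies of $T_l$, so $H^2(l)=2\int\!\!\int_{T_l}g(r)g(s)\,dr\,ds$, and hence
\begin{equation*}
\Phi(l)-H^2(l)=\int\!\!\int_{T_l}g(r)\bigl(g(r)-2g(s)\bigr)\,dr\,ds,
\end{equation*}
not $\int\!\!\int_{T_l}g(r)\bigl(g(r)-g(s)\bigr)\,dr\,ds$. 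Monotonicity of $g$ gives $g(s)\le g(r)$ for $r<s$, which makes $g(r)-g(s)\ge 0$ but says nothing about the sign of $g(r)-2g(s)$; for $g$ nearly constant that integrand is strictly negative. What your symmetrization argument actually proves, under the monotonicity you correctly flag as an extra hypothesis, is $\Phi(l)\ge\frac{1}{2}H^2(l)$, since $\frac{1}{2}H^2(l)=\int\!\!\int_{T_l}g(r)g(s)\,dr\,ds$ and the integrand $g(r)(g(r)-g(s))$ is then indeed nonnegative.

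This is not a repairable slip, because the inequality as stated is false even for the model nonlinearity: with $h(l)=l\log(e+|l|)$ one has $g(r)=1+O(r)$ near the origin, so $\Phi(l)=\frac{l^2}{2}+O(l^3)$ while $H^2(l)=l^2+O(l^3)$, and $\Phi(l)<H^2(l)$ for small $l>0$. (The paper's own argument is entirely different --- Cauchy--Schwarz in the form $H^2(s)\le s\varphi(s)$ followed by integration by parts --- and it too stops short of the stated conclusion, ending at $\Phi(s)\ge H^2(s)-\frac{1}{\alpha}H(s)+\dots$.) The constructive way out is that the weaker bound $\Phi\ge\frac{1}{2}H^2$ which your method does deliver is all that is needed downstream: in Lemma \ref{decay} the constant in front of $H^2$ only changes $C(\alpha,E,f)$. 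So restate the lemma with the constant $\frac{1}{2}$ (and with $|h|$ non-decreasing on $[0,\infty)$, which holds for \eqref{ipoh}), and keep your Fubini/symmetrization proof with the factor of two corrected.
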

\begin{proof}
Let $t>0$, we have
\begin{equation}
H(s)=\int_0^s \frac{dl}{1+h(l)}\leq s^{\frac{1}{2}}\left( \int_0^s \frac{dl}{(1+h(l))^2}\right)^2
\end{equation}
Otherwise, by integration by part, we write
\begin{align*}
\Phi(s)&=\int_0^s \varphi(l)dl=\int_0^s\left( \int_0^l\frac{dr}{(|h(r)|+1)^2}\right)\cdot 1 ds,\\
&= \left[l\varphi(l) \right]^s_0-\int_0^s l\varphi^\prime(l)dl,\\
&\geq H^2(s)-\int_0^s \frac{l dl}{(1+h(l))^2}.
\end{align*}
Moreover, we have
\begin{align*}
H(s)=\left[\frac{l}{1+h(l)} \right]_0^s+\int_0^s \frac{l h^\prime(l) dl}{(1+h(l))^2}.
\end{align*}
If $h^\prime(l)>\alpha$ with $\alpha>0$, then 
$$\int_0^s \frac{l  dl}{(1+h(l))^2}\geq \frac{1}{\alpha} \left[H(s)-\frac{s}{1+h(s)} \right].$$
Thus, by the previous inequality we get
$$\Phi(s)\geq H^2(s)-\frac{1}{\alpha}H(s)+\frac{s}{1+h(s)}.$$
\end{proof} 
 
\section{Proof of the existence results}
In order to prove our existence and regularity results, we will consider the approximating
problems
\begin{equation} \label{prapr}
\begin{cases}
\ds\frac{\partial u_n}{\partial t} -\div(\mathcal{M}(x,t)\nabla u_n)= -\div(h(T_n(u_n))E_n(x,t))+f_n(x,t) \qquad & \mbox{in } \mathcal{Q},\\
u_n (x,0) = 0 & \mbox{in }  \Omega,\\
u_n=0 & \mbox{on } \Sigma.
\end{cases}
\end{equation}
By known results (see, for instance, \cite{LL}), there exists at least a weak solution $u_n$ of \eqref{prapr} which
belongs to $L^2(0,T; W^{1,2}_0(\Omega))\cap C^0([0,T];L^2(\Omega))$ and satisfies 
\begin{equation}\label{wf}
\begin{cases}
\ds h(u_n)E_n(x,t)\in (L^2(0,T; W^{-1,2}(\Omega)))^N\\
\\
\ds\int_0^T\int_\Omega \frac{\partial u_n}{\partial t}\phi \;dxdt+\int_0^T\int_\Omega \mathcal{M}(x,t)\nabla u_n\nabla \phi\;dxdt\\=\ds\int_0^T\int_\Omega h(u_n)E_n(x,t)\nabla \varphi\;dxdt+\int_0^T\int_\Omega f_n(x,t)\phi\;dxdt
\end{cases}
\end{equation}
for every $\phi \in L^2(0,T; W^{1,2}_0(\Omega)).$

If   the assumption \eqref{ipoh} holds, there exists $C_1>0$ such that $|\varphi(s)| \leq C_1$ for any  $n \in \mathbb{N}$. The main tool of that paper is the following a priori decay for the 
measure of superlevel sets of solutions
\begin{lemma} \label{decay}
Assume \eqref{alfa}, \eqref{gehy}, and take $\mu\ge0$, $E \in (L^2(\mathcal{Q}))^N$, and $f \in L^1(\mathcal{Q})$. Then there exists $C=C(\alpha,E,f)>0$ such that
\begin{equation}
\label{stima_decay}
|\{|\un|>k\}|\le \frac{C}{|H(k)|^{\frac{2(N+2)}{N}}} \quad  \forall \  n \in \mathbb{N},\, k\in \mathbb{R}
\end{equation}
\end{lemma}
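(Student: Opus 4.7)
The plan is to test the approximating equation \eqref{wf} with $\varphi(u_n)$, which is an admissible test function because $\varphi$ is globally Lipschitz (indeed $0\le\varphi'\le 1$) with $\varphi(0)=0$, so $\varphi(u_n)\in L^2(0,T;W^{1,2}_0(\Omega))$, and $\|\varphi\|_\infty\le C_1$ (uniform boundedness follows from \eqref{gehy}, which makes the integrand in \eqref{nonlintest} integrable at infinity). The parabolic term produces $\int_\Omega \Phi(u_n(\tau))\,dx \ge 0$ by the zero initial datum; the diffusion term is bounded below by $\alpha\int_\mathcal{Q}|\nabla H(u_n)|^2\,dxdt$ thanks to \eqref{alfa} and the identity $\varphi'(l)=H'(l)^2=(|h(l)|+1)^{-2}$; and the source contributes at most $C_1\|f\|_{L^1(\mathcal{Q})}$.

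The delicate step is the convection term, which I would rewrite as
$$\int_\mathcal{Q} h(T_n(u_n))\, E_n \cdot \nabla u_n\, \varphi'(u_n)\,dxdt = \int_\mathcal{Q} \frac{h(T_n(u_n))}{|h(u_n)|+1}\, E_n \cdot \nabla H(u_n)\,dxdt,$$
using the crucial algebraic identity $\nabla u_n/(|h(u_n)|+1)^2 = \nabla H(u_n)/(|h(u_n)|+1)$. The prefactor is bounded by $1$ by monotonicity of $|h|$ (so $|h(T_n(u_n))|\le|h(u_n)|$), and a Cauchy--Schwarz plus Young inequality with parameter $\alpha$ lets me absorb $\tfrac{\alpha}{2}\int_\mathcal{Q}|\nabla H(u_n)|^2$ on the left and leaves $\tfrac{1}{2\alpha}\|E\|^2_{L^2(\mathcal{Q})}$ on the right. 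No smallness on $E$ is required.

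Combining these estimates and applying Lemma \ref{lme} ($\Phi\ge H^2$) then gives a uniform bound
$$\esssup_{0\le t\le T}\int_\Omega H(u_n(t))^2\,dx+\int_\mathcal{Q}|\nabla H(u_n)|^2\,dxdt \le C(\alpha,E,f),$$
so that $H(u_n)$ is bounded in $L^\infty(0,T;L^2(\Omega))\cap L^2(0,T;W^{1,2}_0(\Omega))$ uniformly in $n$. Lemma \ref{lm2} with $q=\rho=2$ upgrades this to a uniform $L^{2(N+2)/N}(\mathcal{Q})$ bound for $H(u_n)$. Since $|H|$ is increasing, Chebyshev's inequality on the level set $\{|u_n|>k\}$ yields
$$|H(k)|^{2(N+2)/N}\,|\{|u_n|>k\}| \le \int_\mathcal{Q}|H(u_n)|^{2(N+2)/N}\,dxdt \le C,$$
which is precisely \eqref{stima_decay}.

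The main obstacle I anticipate is the uniform control of the convection term: the combination of the pairing $\nabla u_n/(|h(u_n)|+1)^2=\nabla H(u_n)/(|h(u_n)|+1)$ with $|h(T_n(u_n))|\le|h(u_n)|$ is exactly what makes the Young absorption work without any smallness assumption on $E$, and it is essentially the whole reason for introducing the weight $\varphi'$ built from $(|h|+1)^{-2}$ rather than, say, a more standard power weight.
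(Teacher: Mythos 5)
Your proof is correct and follows essentially the same route as the paper: an energy estimate obtained by testing with the weight $\varphi$ built from $(|h|+1)^{-2}$, the absorption of the convection term via Young's inequality, Lemma \ref{lme} to pass from $\Phi$ to $H^2$, Lemma \ref{lm2} to get the $L^{2(N+2)/N}$ bound on $H(u_n)$, and Chebyshev. The only (inessential) difference is that the paper tests with $G_{\varphi(k)}(\varphi(u_n^{\pm}))\chi_{(0,\tau)}$, localizing the estimate to the superlevel sets and treating the positive and negative parts separately, whereas you use the global test function $\varphi(u_n)$; both yield the stated decay.
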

\begin{proof}
Given $k>0$ we consider the function $\phi=G_{\varphi(k)}(\varphi(\un^+))\chi_{(0,\tau)}$ as test function in \eqref{wf}
\begin{equation*}
\begin{aligned}
\ds&\int_0^\tau\int_\Omega \frac{\partial u_n}{\partial t}G_{\varphi(k)}(\varphi(\un^+)) \;dxdt+\int_0^\tau\int_\Omega \mathcal{M}(x,t)\nabla u_n\nabla G_{\varphi(k)}(\varphi(\un^+))\;dxdt\\&=\ds\int_0^\tau\int_\Omega h(u_n)E_n(x,t)\nabla G_{\varphi(k)}(\varphi(\un^+))\;dxdt+\int_0^\tau\int_\Omega f(x,t)G_{\varphi(k)}(\varphi(\un^+))\;dxdt
\end{aligned}
\end{equation*}
Observing that by construction, we have 
\be\label{setset}
\begin{split}
\left\{(x,t) \in \mathcal{Q} \, : \varphi(\un(x,t)) > \varphi(k)  \right\}=\left\{(x,t) \in \mathcal{Q} \, : \un(x,t) >k \right\}
\end{split}
\ee 
which implies that
\[
\begin{split}
&\int_0^\tau\int_{\{k<\un^+\}} \frac{\partial u_n}{\partial t}\varphi(\un^+)\;dxdt+
\alpha\int_0^\tau\int_{\{k<\un^+\}} \frac{|\nabla \un^+|^2}{(|h(\un^+)|+1)^2}\;dxdt\\&\le  \int_0^\tau\int_{\{k<\un^+\}} |h(\un^+)||E| \frac{|\nabla \un^+|}{(|h(\un^+)|+1)^2}\;dxdt+\int_0^\tau\int_{\{k<\un^+\}} |f| \varphi(\un^+)\;dxdt\\
\end{split}
\] 
since we have $\ds\frac{|h(\un^+)}{|h(\un^+)|+1}<1$ and $|\varphi(\un^+)|\le C_1$, then we get
\[
\begin{split}
&\int_{\{k<\un^+\}} \Phi(u_n^+(x,\tau))\;dx+
\alpha\int_0^\tau\int_{\{k<\un^+\}} \frac{|\nabla \un^+|^2}{(|h(\un^+)|+1)^2}\;dxdt\\&
\le  \int_0^\tau\int_{\{k<\un^+\}} |E| \frac{|\nabla \un^+|}{|h(\un^+)|+1}\;dxdt+C_1 \int_0^\tau\int_{\{k<\un^+\}}|f(x)|\;dxdt,
\end{split}
\] 
with $\Phi(s)=\ds\int_0^s \varphi(r)\;dr.$

Applying Young's inequality and recalling property \eqref{setset} we obtain that

\begin{equation*}
\begin{split}
&\int_{\{k<\un^+\}} \Phi(u_n^+(x,\tau))\;dx +
\alpha\int_0^\tau\int_{\{k<\un^+\}} \vert\nabla H(u_n^+)\vert^2\;dxdt \\&\le  \frac{\alpha}{2}\int_0^\tau\int_{\{k<\un^+\}} |\nabla H(\un^+)|^2\;dxdt+\frac{1}{2\alpha}\int_0^\tau\int_{\{k<\un^+\}} |E(x)|^2\;dxdt\\&+ C_1 \int_0^\tau\int_{\{k<\un^+\}}|f(x)|\;dxdt.
\end{split}
\end{equation*}
Therefore, we get
\begin{equation*}
\begin{split}
&\int_{\{k<\un^+\}} \Phi(u_n^+(x,\tau))\;dx +
\frac{\alpha}{2}\int_0^\tau\int_{\{k<\un^+\}} \vert\nabla H(u_n^+)\vert^2\;dxdt \\&\le  \frac{1}{2\alpha}\int_0^\tau\int_{\{k<\un^+\}} |E(x)|^2\;dxdt+ C_1 \int_0^\tau\int_{\{k<\un^+\}}|f(x)|\;dxdt.
\end{split}
\end{equation*}
Thanks to the Lemma \ref{lme}, we have
\begin{equation*}
\begin{split}
&\int_{\{k<\un^+\}} H^2(u_n^+(x,\tau))\;dx +
\frac{\alpha}{2}\int_0^\tau\int_{\{k<\un^+\}} \vert\nabla H(u_n^+)\vert^2\;dxdt \\&\le  \frac{1}{2\alpha}\int_0^\tau\int_{\{k<\un^+\}} |E(x)|^2\;dxdt+ C_1 \int_0^\tau\int_{\{k<\un^+\}}|f(x)|\;dxdt.
\end{split}
\end{equation*}
Taking the supremum on $\tau$ in $(0,T)$, we get
\begin{equation}\label{inqb}
\ds \sup_{\tau\in [0,T]}\int_{\{k<\un^+\}} H^2(u_n^+(x,T))\;dx +
\frac{\alpha}{2}\int_0^T\int_{\{k<\un^+\}} \vert\nabla H(u_n^+)\vert^2\;dxdt\leq C(\alpha,E,f),
\end{equation}
with $$C(\alpha,E,f):=\ds\frac{1}{2\alpha}\int_0^T\int_{\{k<\un^+\}} |E(x)|^2\;dxdt+ C_1 \int_0^T\int_{\{k<\un^+\}}|f(x)|\;dxdt$$
Thanks to the Lemma \ref{lm2}, we obtain that
{\footnotesize
\begin{align*}
\ds &\int_0^T\int_{\{k<\un^+\}} \vert H(u_n^+)\vert^{\frac{2(N+2)}{N}} \;dxdt \\&\leq \bar{C}\left(\sup_{\tau\in [0,T]}\int_{\{k<\un^+\}} H^2(u_n^+(x,T))\;dx \right)^{\frac{2}{N}}\int_0^T\int_{\{k<\un^+\}} \vert\nabla H(u_n^+)\vert^2\;dxdt.
\end{align*}}

Since we have
$$
\left\{(x,t) \in \mathcal{Q} \, :  \un^+(x,t) \vert>k \right\}
=\left\{(x,t) \in \mathcal{Q} \, :    H(\un^+(x,t)) > H(k)  \right\},
$$
thus, we derive that
\begin{equation*}
\vert \{k<\un^+\}\vert\leq \frac{C(\alpha,E,f)}{(H(k))^{\frac{2(N+2)}{N}}}.
\end{equation*}

 Repeating the same argument used before with the test function $v=G_{\varphi(k)}(\varphi(\un^-))\chi_{(0,\tau)}$, we derive the following estimate
 \begin{equation*}
\vert \{-k>\un^-\}\vert\leq \frac{\bar{C}(\alpha,E,f)}{(H(k))^{\frac{2(N+2)}{N}}}.
\end{equation*}
Hence, by recalling that $\un=\un^++\un^-$, the estimate \eqref{stima_decay} holds.
\end{proof}

%
%
%
%
Let us provide now the proof of existence of bounded solutions.
\begin{proof}[Proof of Theorem \ref{generalh}]

Defining $\eta=H(k)$ and $z_n=H(\un)$, the inequality \eqref{inqb} gives
\begin{equation}\label{inqb1}
\ds \sup_{\tau\in [0,T]}\int_{\Omega} \vert G_\eta(z_n))\vert^2\;dx +
\frac{\alpha}{2}\int_0^T\int_{\Omega} \vert\nabla G_\eta(z_n)\vert^2\;dxdt\leq C(\alpha,E,f),
\end{equation}

Since $|E|^2+|f|$ belongs to $\elle m$ with $m>\frac{N}{2}$, it follows from a result due to D.G. Aronson and J. Serrin (see \cite{Aronson_1967},) that there exists a positive constant $\mathcal{C}$ such that
\[
\| z_n\|_{\elle{\infty}}=\|H(\un)\|_{\elle{\infty}}\le \mathcal{C},\quad \text{ for every $n\in \N$.}
\]

Thus, the  assumption \eqref{growthg}, implies that
\[
\|\un\|_{\elle{\infty}}\le H^{-1}(C).
\] 
Moreover, estimate \eqref{inqb1} implies  that $$ \|z_n\|_{L^2(0,T,W^{1,2}_0(\Omega))}\le \mathcal{C},$$
 so that  $\{\un\}$ is bounded also in $ L^2(0,T,W^{1,2}_0(\Omega))$.\\
 
 Therefore,  we conclude that, up to subsequence (still denoted by $u_n$) there exists  $u\in L^2(0,T,W^{1,2}_0(\Omega))$ such that 
\begin{equation}\label{cvb1}
u_n \rightharpoonup u \quad \text{ weakly in $L^2(0,T,W^{1,2}_0(\Omega))$}.
\end{equation}
This implies,  by Aubin type lemma (see \cite{Simon}), that there exists a subsequence, still denoted by $u_n$, such that
\begin{equation}\label{cvb2}
u_n \rightarrow u \quad \text{ a.e. in $\mathcal{Q}$}.
\end{equation} 
For any given measurable set $\mathcal{E}\subset \mathcal{Q}$ and by Hölder inequality, we have
\begin{equation*}
\int_{\mathcal{E}} \vert h(u_n)E(x,t)\nabla \varphi\vert\;dxdt\le C\left(\int_\mathcal{E}\vert \nabla \varphi\vert^2\;dxdt, \right)^{\frac{1}{2}}
\end{equation*}
so that, by the convergences \eqref{cvb1}, \eqref{cvb2} and Vitali's theorem, we can pass to the limit in the approximate problem \eqref{wf} and the result is proved. The uniqueness follows from Theorem \ref{comparison}.
\end{proof}
\begin{remark}
Notice that the estimate \eqref{stima_decay} is true for all nonlinearity $h(u)$. However, as we have just seen in the proof of Theorem \ref{generalh}, it is useful only if $H(k)\to\infty$ as $k$ diverges. Another consequence of \eqref{growthg} is that for any $\epsilon >0$ there exists $k_{\epsilon}$ such that $\forall \, k > k_{\epsilon}$ it follows that
\begin{equation}
\label{levelset}
|\{(x,t) \in \mathcal{Q} : \quad |u_n(x,t)|>k\}| \leq \epsilon , \quad \text{uniformly \, w.r. to } n \in \mathbb{N}.
\end{equation}
This estimate is crucial in order to prove the boundedness of $\{\un\}$ in the energy space, at least for some choices of the nonlinearity $h(u)$.
\end{remark}
Let $k>0$, in the sequel, for any measurable function $u$ defined on $\mathcal{Q}$ and any $l>0$, we define 
$$\mathcal{Q}_l:=\{|u|> k\}\times (0,l).$$

\begin{proof}[Proof of Theorem \ref{logarithmich}]
The first step is to show that, for any $a\ge1$, there exists\\ $C=C(a, f, E)$ such that 
\begin{equation}
\label{stimaloga}
\|\log^a(e+|\un|)\|_{\elle{\frac{2(N+2)}{N}}}\le C.
\end{equation}
Let us set
\[
v=\int_0^{\un}\frac{\log^{2(a-1)}(e+|s|)}{(e+|s|)^2}ds,
\] 
and notice that $|v|\le C_a$, for some positive constant $C_a$. Taking $v$ as a test function in \eqref{wf} and using Young's inequality, we get
\[
\begin{split}
&\int_0^\tau \int_\Omega \frac{\partial u_n}{\partial t}v\;dxdt+\alpha\io|\nabla \un|^2\frac{\log^{2(a-1)}(e+|\un|)}{(e+|\un|)^2}\;dxdt \\\le& \io |E(x)||\nabla \un| \frac{\log^{2a-1}(e+|\un|)}{(e+|\un|)^2}\;dxdt +C_a\|f\|_{\elle 1}
\\ \le& \frac{1}{2\alpha}\io\log^{2a}(e+|\un|)|E(x)|^{2}\;dxdt+\frac{\alpha}{2}\io|\nabla \un|^2\frac{\log^{2(a-1)}(e+|\un|)}{(e+|\un|)^2}\;dxdt\\+&C_a\|f\|_{\elle 1}.\\
\end{split}
\]
Let $\ds\Psi(s)=\int_0^s v(r)\;dr,$  we have
\begin{equation*}
\int_0^\tau \int_\Omega \frac{\partial u_n}{\partial t}v\;dxdt=\int_\Omega \Psi(u_n(x,\tau))\;dx,
\end{equation*}
\[
\begin{split}
\frac{\alpha}{2a^2}\io|\nabla \log^a(e+|\un|)|^2\;dxdt&=\frac{\alpha}{2}\io|\nabla \un|^2\frac{\log^{2(a-1)}(e+|\un|)}{(e+|\un|)^2}\;dxdt,
\end{split}
\]
and
\[
\begin{split}
 \frac{1}{2\alpha}\io\log^{2a}(e+|\un|)|E(x)|^{2}\;dxdt&= \frac{1}{2\alpha}\io\left[\log^{a}(e+|\un|)+1-1\right]^2|E(x)|^{2}\;dxdt\\
&\le \frac{1}{\alpha}\int_{\mathcal{Q}_\tau}(\log^{a}(e+|\un|)+1)^2|E(x)|^{2}\;dxdt\\&+\frac{1}{\alpha}\|E\|_{\elle 2}^2+\frac{\log^{2a}(e+k)}{2\alpha}\|E\|_{\elle 2}^2.
\end{split}
\]
Passing to the supremum for  $\tau \in (0,T)$, we get after applying Holder inequality,
{\footnotesize
\begin{align*}
 \sup_{\tau\in(0,\tau)}&\int_\Omega \Psi(u_n(x,\tau))\;dx+\frac{\alpha}{2a^2}\io|\nabla \left[\log^a(e+|\un|)+1\right]|^2\;dxdt \\ \le& \frac{1}{\alpha}\int_{\mathcal{Q}_{\tau}}\left[\log^{a}(e+|\un|)+1\right]^2|E(x)|^{2}\;dxdt+\frac{\log^{2a}(e+k)+2}{2\alpha}\|E\|_{\elle 2}^2+C_a\|f\|_{\elle 1},\\\le&
\left[\io \left[\log^{a}(e+|\un|)+1\right]^{\frac{2(N+2)}{N}}\;dxdt\right]^{\frac{N}{N+2}}\left[\int_{\mathcal{Q}_{\tau}} |E(x,t)|^{N+2}\;dxdt\right]^{\frac{2}{N+2}}\\+&\frac{\log^{2a}(e+k)+2}{2\alpha}\|E\|_{\elle 2}^2+C_a\|f\|_{\elle 1}.
\end{align*}}
Observing that for a positive constant $\mathcal{C}_a<\ds\left(\frac{1}{2a}\right)^2$, we have
$$\Psi(u_n(x,\tau))\geq \mathcal{C}_a\left[\log^a(e+|\un(x,\tau)|)+1 \right]^2.$$
Therefore, we obtain that
\begin{equation*}
\begin{aligned}
&\sup_{\tau\in(0,\tau)}\mathcal{C}_a\int_\Omega \vert w_n(x,\tau)\vert^2\;dx+\frac{\alpha}{2a^2}\io|\nabla w_n|^2\;dxdt  \\&\le 
\left[\io \vert w_n\vert^{\frac{2(N+2)}{N}}\;dxdt\right]^{\frac{N}{N+2}}\Vert E\Vert^2_{L^{N+2}(\mathcal{Q}_{\tau})}\\&+\frac{\log^{2a}(e+k)+2}{2\alpha}\|E\|_{\elle 2}^2+\mathcal{C}_a\|f\|_{\elle 1},
\end{aligned}
\end{equation*}
where $$w_n(x,t):=\log^a(e+|\un(x,t)|)+1.$$
Applying Lemma \ref{lm2} with $v = w_n$, $q=\rho= 2$ and $\varrho=\frac{2(N+2)}{N}$, and thanks to the previous inequality, we get
\[
\begin{split}
\int_{\mathcal{Q}}\vert w_n\vert^{\frac{2(N+2)}{N}}\;dxdt\le \mathcal{L}\left(\int_{\mathcal{Q}}\vert w_n\vert^{\frac{2(N+2)}{N}}\;dxdt \right)\Vert E\Vert^2_{L^{N+2}(\mathcal{Q}_{\tau})}+\mathcal{R},
\end{split}
\]
where $$\mathcal{L}:=2^{\frac{N}{2}}\bar{C}\frac{2a^2}{\alpha \mathcal{C}_a^{\frac{2}{N}}}\text{ and }\mathcal{R}:=\mathcal{L}\left[\frac{\log^{2a}(e+k)+2}{2\alpha}\|E\|_{\elle 2}^2+C_a\|f\|_{\elle 1} \right]^{\frac{2}{N}+1}.$$
Thanks to Lemma \ref{decay} it's possible to choose $k$ large enough  in such a way that
$$\Vert E\Vert^2_{L^{N+2}(\mathcal{Q}_{\tau})}\le \frac{1}{2\mathcal{L}},$$
so that, we deduce 
%
\begin{equation*}
\left\Vert w_n\right\Vert_{L^{\frac{2(N+2)}{N}}(\mathcal{Q})}=\left\Vert \log^{a}(e+|\un|)+1\right\Vert_{L^{\frac{2(N+2)}{N}}(\mathcal{Q})}<\mathcal{C}.
\end{equation*}

Next, we will prove that $\left\{\un\right\}$ is bounded in $\w12$.

Picking up $\phi=\un\chi_{(0,\tau)},$ $\tau\in (0,T)$,  in \eqref{wf}, we get
\begin{equation*}
\begin{aligned}
\ds&\int_0^\tau\int_\Omega \frac{\partial u_n}{\partial t}u_n \;dxdt+\int_0^\tau\int_\Omega \mathcal{M}(x,t)\nabla u_n\nabla u_n\;dxdt\\&=\ds\int_0^\tau\int_\Omega h(u_n)E_n(x,t)\nabla u_n\;dxdt+\int_0^\tau\int_\Omega f(x,t)u_n\;dxdt.
\end{aligned}
\end{equation*}
Using \eqref{alfa}, applying Young's inequality, we obtain
\begin{equation*}
\label{notte}
\begin{split}
&\ds\frac{1}{2}\int_\Omega \vert u_n(\tau)\vert^2 \;dx+\frac{\alpha}{2}\int_0^\tau\int_{\Omega}|\nabla \un|^2\;dxdt\\&\le \frac{1}{2\alpha}\int_0^\tau\int_{\{|\un|> k|\}}|\un|^2\log^2(e+|\un|)|E|^2\;dxdt\\
&+\frac{k^2\log^2(e+k)}{2\alpha}\int_0^\tau\int_{\Omega}|E|^2\;dxdt+\int_0^\tau\int_\Omega |f||\un| \;dxdt.
\end{split}
\end{equation*}
Passing to the supremum for  $\tau \in (0,T)$, we acquire
\begin{equation*}
\label{notte}
\begin{split}
&\sup_{\tau\in (0,\tau)}\frac{1}{2}\int_\Omega \vert u_n(\tau)\vert^2 \;dx+\frac{\alpha}{2}\int_0^{\tau} \|\nabla \un\|^2_{(L^2(\Omega))^N}\;dxdt\\&\le \frac{1}{2\alpha}\int_{\mathcal{Q}_{\tau}}|\un|^2\log^2(e+|\un|)|E|^2\;dxdt\\
&+\frac{k^2\log^2(e+k)}{2\alpha}\|E\|^2_{\elle2}+\int_0^{\tau} \Vert f\Vert_{W^{-1,2}(\Omega)}\Vert u_n\Vert_{W^{1,2}_0(\Omega)}\;dt,
\end{split}
\end{equation*}
Applying Young's and Poincaré's inequalies to get
\begin{equation*}
\label{notte1}
\begin{split}
&\sup_{\tau\in (0,\tau)}\frac{1}{2}\int_\Omega \vert u_n(\tau)\vert^2 \;dx+\frac{\alpha}{4}\int_0^{\tau} \|\nabla \un\|^2_{(L^2(\Omega))^N}\;dt\\&\le \frac{1}{2\alpha}\int_{\mathcal{Q}_{\tau}}|\un|^2\log^2(e+|\un|)|E|^2\;dxdt\\
&+\frac{k^2\log^2(e+k)}{2\alpha}\|E\|^2_{\elle2}+C\int_0^{\tau} \Vert f\Vert^2_{W^{-1,2}(\Omega)}\;dt,
\end{split}
\end{equation*}
Now, we estimate the first term in the right hand side. Applying Holder inequality and thanks to \eqref{stimaloga}, with $a=\max\left\{\frac{b N}{2(N+2)},1\right\}$, yields that
{\footnotesize
\begin{equation*}
\begin{aligned}
&\int_{\mathcal{Q}_{\tau}}|\un|^2\log^2(e+|\un|)|E|^2\;dxdt\\&\leq \left(\int_0^{\tau}\int_\Omega \vert u_n \vert^{\frac{2(N+2)}{N}} \;dxdt\right)^{\frac{N}{N+2}}\left(\int_{\mathcal{Q}_{\tau}} \left\vert E \log(e+\vert u_n\vert)\right\vert^{N+2}\;dxdt\right)^{\frac{2}{N+2}},\\&
\leq \left(\int_0^{\tau}\int_\Omega \vert u_n \vert^{\frac{2(N+2)}{N}} \;dxdt\right)^{\frac{N}{N+2}}\left(\int_{\mathcal{Q}_{\tau}} \left\vert E\right\vert^{r}\;dxdt\right)^{\frac{2}{r}}\left(\int_0^{\tau}\int_\Omega \left\vert \log(e+\vert u_n\vert)\right\vert^{\frac{r(N+2)}{r-N-2}}\;dxdt\right)^{\frac{2(r-N-2)}{r(N+2)}},\\&
\le \left(\int_0^{\tau}\int_\Omega \vert u_n \vert^{\frac{2(N+2)}{N}} \;dxdt\right)^{\frac{N}{N+2}} \Vert E\Vert^2_{L^r(\mathcal{Q}_{\tau})}\Vert \log(e+\vert u_n\vert)\Vert^2_{L^b(\mathcal{Q})},\\&\le
C\left(\int_0^{\tau}\int_\Omega \vert u_n \vert^{\frac{2(N+2)}{N}} \;dxdt\right)^{\frac{N}{N+2}} \Vert E\Vert^2_{L^r(\mathcal{Q}_{\tau})},
\end{aligned}
\end{equation*}}
where $\ds\frac1b:=\frac{1}{N+2}-\frac1r$.

 Using  Gagliardo-Nirenberg inequality together with Young inequality yields that
 \begin{equation}\label{inqlog}
\begin{aligned}
&\int_{\mathcal{Q}_{\tau}}|\un|^2\log^2(e+|\un|)|E|^2\;dxdt\\&\leq \left[c_1\sup_{\tau\in (0,\tau)}\int_\Omega \vert u_n(\tau)\vert^2 \;dx+c_2\int_0^{\tau} \|\nabla \un\|^2_{(L^2(\Omega))^N}\;dxdt \right]\Vert E\Vert^2_{L^r(\mathcal{Q}_{\tau})}
\end{aligned}
\end{equation}


Therefore, applying  Gagliardo-Nirenberg inequality, we obtain

\begin{equation*}
\begin{aligned}
&\left[\frac{1}{2}-c_1\Vert E\Vert^2_{L^r(\mathcal{Q}_{\tau})} \right]\sup_{\tau\in (0,\tau)}\int_\Omega \vert u_n(\tau)\vert^2 \;dx\\&+\left[\frac{\alpha}{4}-\frac{Cc_2}{2\alpha}\Vert E\Vert^2_{L^r(\mathcal{Q}_{\tau})}\right]\int_0^{\tau} \|\nabla \un\|^2_{(L^2(\Omega))^N}\;dt\\&\le 
\frac{k^2\log^2(e+k)}{2\alpha}\|E\|^2_{\elle2}+C\int_0^{\tau} \Vert f\Vert^2_{W^{-1,2}(\Omega)}\;dt,
\end{aligned}
\end{equation*}
where $$c_1:=\frac{2 C\bar{C}^{\frac{N}{N+2}}}{N+2} \text{ and }c_2:=\frac{N C \bar{C}^{\frac{N}{N+2}}}{N+2}.$$
Now, due to Lemma \ref{decay} we select $k$  such that 
\begin{equation*}
\left[\frac{1}{2}-c_1\Vert E\Vert^2_{L^r(\mathcal{Q}_{\tau})} \right]>0\text{ and }\left[\frac{\alpha}{4}-c_2\Vert E\Vert^2_{L^r(\mathcal{Q}_{\tau})}\right]>0,
\end{equation*}
then,  it follows that
\begin{equation*}
\begin{aligned}
&\sup_{\tau\in (0,\tau)}\int_\Omega \vert u_n(\tau)\vert^2 \;dx+\int_0^{\tau} \|\nabla \un\|^2_{(L^2(\Omega))^N}\;dt\\&\le 
\frac{k^2\log^2(e+k)}{2c_3\alpha}\|E\|^2_{\elle2}+\frac{C}{c_3}\int_0^{\tau} \Vert f\Vert^2_{W^{-1,2}(\Omega)}\;dt,
\end{aligned}
\end{equation*}
where $$c_3:=\min\left\lbrace\left[\frac{1}{2}-c_1\Vert E\Vert^2_{L^r(\mathcal{Q}_{\tau})} \right],\left[\frac{\alpha}{4}-c_2\Vert E\Vert^2_{L^r(\mathcal{Q}_{\tau})}\right]\right\rbrace.$$
Thus, there exist $\tau\in (0,T)$, independent on $n$, and a constant $M$ such that
\begin{equation}\label{esuf1}
\Vert u_n\Vert_{L^\infty(0,\tau, L^2(\Omega))}+\Vert u_n\Vert_{L^2(0,\tau, W^{1,2}_0(\Omega))}\leq \mathcal{C}.
\end{equation}

Gathering the last inequality and \eqref{inqlog}(with $\tau=T$ and $k=0$), we get
\begin{equation*}\label{esuf2}
\begin{aligned}
&\int_{\mathcal{Q}}|\un|^2\log^2(e+|\un|)|E|^2\;dxdt\leq \max\{c_1,c_2\}\mathcal{C}\Vert E\Vert^2_{L^r(\mathcal{Q}_{\tau})},
\end{aligned}
\end{equation*}
which in turn, by following the proof of Theorem \ref{generalh}, implies that the sequence $\{h(u_n)E_n(x,t)\nabla \phi\}$ is equi-integrable. 

As a consequence of \eqref{esuf1} together with Aubin's lemma, it
is possible to state the following convergences
\begin{align*}
u_n \rightharpoonup u &\quad \text{ weakly in $L^2(0,T,W^{1,2}_0(\Omega))$},\\
u_n \rightarrow u &\quad \text{ a.e. in $\mathcal{Q}$}.
\end{align*}
 Now, we can apply Vitali's theorem in order to pass to the limit as $n$ diverges and to get
\begin{equation*}
\lim_{n\rightarrow \infty} \ds\int_0^T\int_\Omega h(u_n)E_n(x,t)\nabla \varphi\;dxdt=\int_0^T\int_\Omega h(u)E(x,t)\nabla \varphi\;dxdt,
\end{equation*}
and so, by passing to the limit in the other terms in \eqref{wf}, we conclude that $u$ is a weak solution to Problem \ref{problem}.

Regarding uniqueness, observe that 
$$|h'(s)| \leq 2 + |s|, \text{ and }  |u|^2|E|^2 \in L^1(\mathcal{Q}).$$
 Consequently, Theorem \ref{comparison} guarantees uniqueness.
\end{proof}

\subsection*{Acknowledgment}
The author is member of the Gruppo Nazionale per l’Analisi Matematica, la
Probabilit\'{a} e le loro Applicazioni (GNAMPA) of the Istituto Nazionale di Alta Matem-
atica (INdAM).

\begin{bibdiv}

\begin{biblist}

\bib{Aronson_1967}{article}{
author={Aronson, D. G.},
author={Serrin, J.}
 title={Local behavior of solutions of quasilinear parabolic equations}, 
 journal={Archive for Rational Mechanics and Analysis}, 
 volume={25},
 publisher={Springer Science and Business Media LLC}, 
 year={1967},
  number={2},
  pages={81–122} 
}






\bib{Bumi2009}{article}{
AUTHOR = {Boccardo, L.},
     TITLE = {Some developments on {D}irichlet problems with discontinuous
              coefficients},
   JOURNAL = {Boll. Unione Mat. Ital. (9)},
    VOLUME = {2},
      YEAR = {2009},
    NUMBER = {1},
     PAGES = {285--297},
}

\bib{Bumi2012}{article}{
   AUTHOR = {Boccardo, L.},
     TITLE = {Finite energy solutions of nonlinear {D}irichlet problems with
              discontinuous coefficients},
   JOURNAL = {Boll. Unione Mat. Ital. (9)},
    VOLUME = {5},
      YEAR = {2012},
    NUMBER = {2},
     PAGES = {357--368}
}

\bib{jde}{article}{
   author={Boccardo, L.},
   title={Dirichlet problems with singular convection terms and
   applications},
   journal={J. Differential Equations},
   volume={258},
   date={2015},
   number={7},
   pages={2290--2314}
}

%

\bib{bop}{article}{
   author={Boccardo, L.},
   author={Orsina, L.},
   author={Porretta, A.},
   title={Some noncoercive parabolic equations with lower order terms in
   divergence form},
   note={Dedicated to Philippe B\'{e}nilan},
   journal={J. Evol. Equ.},
   volume={3},
   date={2003},
   number={3},
   pages={407--418},
}

\bib{bopbis}{article}{
   author={Boccardo, L.},
   author={Orsina, L.},
   author={Porzio, M. M.},
   title={Regularity results and asymptotic behavior for a noncoercive
   parabolic problem},
   journal={J. Evol. Equ.},
   volume={21},
   date={2021},
   number={2},
   pages={2195--2211},
}



\bib{bbc2024}{article}{ 
   author={Boccardo, L.},
   author={Buccheri, S.},
   author={Cirmi, G. R.},
   title={Elliptic Problems with Superlinear Convection Terms}, 
   journal={Journal of Differential Equations},
   volume={406},
   date={2024},
   number={2},
   pages={276–301}, 

  }

%

\bib{cirmi2022}{article}{

author={Cirmi, G. R.},
   author={D'Asero, S.}
 title={Nonlinear degenerate elliptic equations with a convection term},
 journal={Journal of Elliptic and Parabolic Equations},
  volume={8}, 
   date={2022}, 
  number={2}, 
  pages={1041–1066}
   }

\bib{marah}{article}{
   author={Marah, A.},
   author={Redwane, H.},
   title={Existence Result for Solutions to Some Noncoercive Elliptic
   Equations},
   journal={Acta Appl. Math.},
   volume={187},
   date={2023},
   pages={Paper No. 18},
}

%
%

\bib{LL}{article}{
  author={Lions, J.L.},
  title={Quelques m{\'e}thodes de r{\'e}solution des probl{\`e}mes aux limites non lin{\'e}aires},
  lccn={79457148},
  series={Collection {\'e}tudes math{\'e}matiques},
  url={https://books.google.it/books?id=PatpMvI_uoYC},
  year={1969},
  publisher={Dunod}
}

\bib{stamp}{article}{
   author={Stampacchia, G.},
   title={Le probl\`eme de Dirichlet pour les \'{e}quations elliptiques du
   second ordre \`a coefficients discontinus},
   journal={Ann. Inst. Fourier (Grenoble)},
   volume={15},
   date={1965},
   pages={189--258}
}
\bib{Simon}{article}{
 author={Simon, J.},
 title={Compact sets in the space $L^p(0,T; B)$}, 
 journal={Annali di Matematica Pura ed Applicata}, 
 volume={146}, 
 year={1986},
  pages={65–96}
%
%
%
 }
 \bib{DiBenedetto_1993}{article}{
  
 title={Degenerate Parabolic Equations}, 
  journal={Universitext}, 
  publisher={Springer New York}, 
  author={DiBenedetto, E.}, 
  year={1993} }
\end{biblist}

\end{bibdiv}

\end{document}